\theoremstyle{definition}
\newtheorem{proposition}{Proposition}
\newtheorem{lemma}[proposition]{Lemma}
\newtheorem{theorem}[proposition]{Theorem}
\newtheorem{corollary}[proposition]{Corollary}
\renewenvironment{proof}[1][\proofname] {\par\pushQED{\qed}\normalfont\topsep6\p@\@plus6\p@\relax\trivlist\item[\hskip\labelsep\bfseries#1\@addpunct{.}]\ignorespaces}{\popQED\endtrivlist\@endpefalse}
\title{A New Upper Bound for Cancellative Pairs}
\author{Barnab\'{a}s Janzer\thanks{Trinity College, Cambridge CB2 1TQ, United Kingdom. Email: bkj21@cam.ac.uk}}
\date{\vspace{-21pt}}
\begin{document}
	\maketitle

\begin{abstract}
A pair $(\mathcal{A},\mathcal{B})$ of families of subsets of an $n$-element set is called cancellative if whenever $A,A'\in\mathcal{A}$ and $B\in\mathcal{B}$ satisfy $A\cup B=A'\cup B$, then $A=A'$, and whenever $A\in\mathcal{A}$ and $B,B'\in\mathcal{B}$ satisfy $A\cup B=A\cup B'$, then $B=B'$. It is known that there exist cancellative pairs with $|\mathcal{A}||\mathcal{B}|$ about $2.25^n$, whereas the best known upper bound on this quantity is $2.3264^n$. In this paper we improve this upper bound to $2.2682^n$. Our result also improves the best known upper bound for Simonyi's sandglass conjecture for set systems.
\end{abstract}\vspace{-5pt}

\section{Introduction}

The notion of a cancellative pair was introduced by Holzman and K\"{o}rner \cite{holzmankorner}. We say that a pair $(\mathcal{A},\mathcal{B})$ of families of subsets of an $n$-element set $S$ is \textit{cancellative} if
\begin{align}
\begin{split}
&\textnormal{whenever $A, A'\in\mathcal{A}$ and $B\in\mathcal{B}$ satisfy $A\cup B=A'\cup B$ then $A=A'$}\\
\textnormal{and }&\textnormal{whenever $A\in\mathcal{A}$ and $B, B'\in\mathcal{B}$ satisfy $A\cup B=A\cup B'$ then $B=B'$;}
\end{split}
\end{align}
or, equivalently,
\begin{align}\label{differencedef}
\begin{split}
&\textnormal{whenever $A, A'\in\mathcal{A}$ and $B\in\mathcal{B}$ satisfy $A\setminus B=A'\setminus B$ then $A=A'$}\\
\textnormal{and }&\textnormal{whenever $A\in\mathcal{A}$ and $B, B'\in\mathcal{B}$ satisfy $B\setminus A=B'\setminus A$ then $B=B'$.}
\end{split}
\end{align}
We will usually take $S=[n]=\{1, ..., n\}$ and will call a cancellative pair with $\mathcal{A}=\mathcal{B}$ a \textit{symmetric cancellative pair}. Note that the assumption that $(\mathcal{A},\mathcal{A})$ is a symmetric cancellative pair is slightly stronger than the assumption that $\mathcal{A}$ is a \textit{cancellative family}, meaning no three distinct sets $A, B, C\in\mathcal{A}$ satisfy $A\cup B=A\cup C$ \cite{franklfuredi}. We mention that the concept of cancellative pairs corresponds to the information theoretic concept of uniquely decodable code pairs for the binary multiplying channel without feedback (see e.g. Tolhuizen \cite{tolhuizen}).\bigskip

In the case when $n$ is a multiple of $3$, we can obtain an example of a  symmetric cancellative pair the following way. Partition $S$ into $n/3$ classes of size $3$, and take $\mathcal{A}$ (and $\mathcal{B}$) to be the collection of subsets of $S$ containing exactly one element from each class. It is not hard to verify that we get a cancellative pair. Here we have $|\mathcal{A}||\mathcal{B}|=3^{2n/3}$, where $3^{2/3}\approx 2.08$. In the symmetric case, Erd\H{o}s and Katona \cite{katona} conjectured this to be the maximal size for cancellative families. A counterexample was found by Shearer \cite{shearer}.
Tolhuizen \cite{tolhuizen} gave a beautiful construction to show that we can achieve $(|\mathcal{A}||\mathcal{B}|)^{1/n}\to 9/4=2.25$, even by symmetric pairs. This construction is (asymptotically) optimal in the symmetric case by a result of Frankl and F\"{u}redi \cite{franklfuredi}.\bigskip

In the general (non-symmetric) case, the exact value of $\alpha=\sup (|\mathcal{A}||\mathcal{B}|)^{1/n}$ is not known. The best known upper bound is due to Holzman and K\"{o}rner \cite{holzmankorner}, who showed that $|\mathcal{A}||\mathcal{B}|<\theta^n$ where $\theta\approx 2.3264$. No lower bound better than Tolhuizen's (symmetric) $2.25$ is known. Our main aim in this paper is to improve the upper bound to $2.2682^n$. Our proof requires some numerical calculations by a computer.\bigskip

A related concept is that of a recovering pair. A pair $(\mathcal{A},\mathcal{B})$ of collections of subsets of an $n$-element set $S$ is called \textit{recovering} \cite{simonyi, holzmankorner} if for all $A, A'\in\mathcal{A}$ and $B, B'\in\mathcal{B}$ we have \vspace{-1pt}
\begin{equation}
A\setminus B=A'\setminus B'\implies A=A'\textnormal{\hspace{12pt} and \hspace{12pt}}B\setminus A=B'\setminus A'\implies B=B'.
\end{equation}
So any recovering pair is cancellative (cf. (\ref{differencedef})). Simonyi's sandglass conjecture for set systems \cite{simonyi} states that $|\mathcal{A}||\mathcal{B}|\leq 2^n$ for a recovering pair. (The value of $2^n$ may be obtained by taking $\mathcal{A}=\mathcal{P}(S_1)$, $\mathcal{B}=\mathcal{P}(S\setminus S_1)$ for any $S_1\subseteq S$. There is a more general sandglass conjecture for lattices, due to Ahlswede and Simonyi \cite{simonyi}.)  Our upper bound of $2.2682^n$ is an improvement on the previous best known bound of $2.284^n$ (Solt\'{e}sz, \cite{soltesz}).

\section{Proof of the upper bound}
Let $h(x)=-x\log_2 x-(1-x)\log_2{(1-x)}$ be the binary entropy function (with the convention $0\log_2 0=0$).\\
Define $\mathcal{A}_{i}=\{A\in\mathcal{A}\mid i\notin A\}$ and $p_i=|\mathcal{A}_{i}|/|\mathcal{A}|$; $q_i$ is defined similarly for $\mathcal{B}$. We quote the following result of Holzman and K\"{o}rner \cite{holzmankorner}. (We will ignore the case when $\mathcal{A}$ or $\mathcal{B}$ is empty.)

\begin{proposition}[Holzman and K\"{o}rner \cite{holzmankorner}] For a cancellative pair $(\mathcal{A},\mathcal{B})$, we have
\begin{equation}
\log_2\left[|\mathcal{A}||\mathcal{B}|\right]\leq {\sum_{i=1}^{n}f(p_i,q_i)} \label{entropy}
\end{equation}
where $f(p,q)=ph(q)+qh(p)$.

\end{proposition}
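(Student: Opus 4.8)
The plan is to run an entropy argument on uniformly random members of the two families. Let $A$ be a uniformly random element of $\mathcal{A}$ and $B$ a uniformly random element of $\mathcal{B}$, chosen independently, and write $H$ for binary (Shannon) entropy, so that $H(A)=\log_2|\mathcal{A}|$, $H(B)=\log_2|\mathcal{B}|$, and by independence $H(A,B)=H(A)+H(B)=\log_2[|\mathcal{A}||\mathcal{B}|]$. The goal is therefore to bound $H(A,B)$ from above by $\sum_{i=1}^n f(p_i,q_i)$.

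The first — and I expect the key — step is the observation that the cancellative property lets us replace $A$ by its difference set $A\setminus B$ without losing information once $B$ is known. Concretely, by the second form \eqref{differencedef} of the definition, the map $(A,B)\mapsto(A\setminus B,\,B)$ is injective on $\mathcal{A}\times\mathcal{B}$ (equal second coordinates force $B=B'$, and then $A\setminus B=A'\setminus B$ forces $A=A'$), so $H(A,B)=H(A\setminus B,\,B)$; combining this with the chain rule and independence gives $H(A)=H(A\setminus B\mid B)$, and symmetrically $H(B)=H(B\setminus A\mid A)$ using the first form of the definition. Hence $\log_2[|\mathcal{A}||\mathcal{B}|]=H(A\setminus B\mid B)+H(B\setminus A\mid A)$, and it remains to bound these two conditional entropies by $\sum_i q_ih(p_i)$ and $\sum_i p_ih(q_i)$ respectively.

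For this I would expand coordinate by coordinate. Writing $X_i$ for the indicator of $i\in A\setminus B$, applying the chain rule over $i=1,\dots,n$, and then using that conditioning on a coarser variable cannot decrease entropy, we get $H(A\setminus B\mid B)=\sum_i H(X_i\mid X_1,\dots,X_{i-1},B)\le \sum_i H(X_i\mid \mathbf{1}[i\in B])$. Now split on whether $i\in B$: if $i\in B$ then $X_i=0$ deterministically, while if $i\notin B$ then $X_i=\mathbf{1}[i\in A]$, whose conditional law — using independence of $A$ and $B$ — is just the unconditional law of $\mathbf{1}[i\in A]$, of entropy $h(p_i)$. Since $\Pr[i\notin B]=q_i$, this contributes $q_ih(p_i)$, so $H(A\setminus B\mid B)\le\sum_i q_ih(p_i)$; the bound for $H(B\setminus A\mid A)$ is identical with the roles of $\mathcal{A}$ and $\mathcal{B}$ (and of $p$ and $q$) interchanged. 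Adding the two bounds yields $\sum_i\bigl(q_ih(p_i)+p_ih(q_i)\bigr)=\sum_i f(p_i,q_i)$, as required.

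The only point that needs genuine care is the opening reduction: that $(A,B)\mapsto(A\setminus B,B)$ really is injective, which is exactly — and only — what ``cancellative'' provides, and that independence is used essentially twice (once to split $H(A,B)$ as $H(A)+H(B)$, and once to identify the conditional law of $\mathbf{1}[i\in A]$ given $i\notin B$ with its unconditional law). Everything after that is a routine application of the chain rule and of the inequality that conditioning does not increase entropy, so I do not anticipate a serious obstacle beyond getting that first identity cleanly stated.
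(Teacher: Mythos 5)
Your proof is correct and follows essentially the same approach the paper outlines: bounding $\log_2|\mathcal{A}|$ via the entropy of $A\setminus B$, using cancellativity for injectivity and the coordinate chain rule for the bound, then symmetrizing. The paper's sketch fixes $B$ and averages $H(A\setminus B)$ over $B\in\mathcal{B}$, while you phrase this as the conditional entropy $H(A\setminus B\mid B)$ with $B$ random and independent of $A$ — these are identical computations, since $H(A\setminus B\mid B)=\frac{1}{|\mathcal{B}|}\sum_{b\in\mathcal{B}}H(A\setminus b)$.
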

The result above can be established by considering the entropies of each of the random variables of the form $\xi^B=A\setminus B$, where $B\in\mathcal{B}$ is fixed and $A\in\mathcal{A}$ is chosen uniformly at random (and doing the same with $\mathcal{A}$, $\mathcal{B}$ interchanged). Holzman and K\"{o}rner \cite{holzmankorner} used (\ref{entropy}) and induction to establish their upper bound of $|\mathcal{A}||\mathcal{B}|<\theta^n$ ($\theta\approx 2.3264$).\bigskip

However, this argument can be improved. We call a cancellative pair \textit{$k$-uniform} if $|A|=|B|=k$ for all $A\in\mathcal{A}$, $B\in\mathcal{B}$. As we will see, bounding $|\mathcal{A}||\mathcal{B}|$ for $k$-uniform families enables us to give bounds for general (non-uniform) pairs. For $n/k$ small, it is easy to give efficient bounds, and for $n/k$ large, we will use that the growth speed of the maximum of $|\mathcal{A}||\mathcal{B}|$ (with $k$ fixed, $n$ increasing) can be bounded.\bigskip

If $(\mathcal{A},\mathcal{B})$ and $(\mathcal{A'},\mathcal{B'})$ are cancellative pairs over disjoint ground sets $S$ and $S'$, define their product $(\mathcal{A''},\mathcal{B''})$ by
$$\mathcal{A''}=\{A\cup A'\mid A\in \mathcal{A}, A'\in\mathcal{A'}\}$$
$$\mathcal{B''}=\{B\cup B'\mid B\in \mathcal{B}, B'\in\mathcal{B'}\}$$
giving a cancellative pair over $S\cup S'$ with $|\mathcal{A''}||\mathcal{B''}|=|\mathcal{A}||\mathcal{B}||\mathcal{A'}||\mathcal{B'}|$.\\
(Note that the cancellative pair in the Introduction is just the product of cancellative pairs of the form $n=3$,\\ $\mathcal{A}=\mathcal{B}=\{\{1\},\{2\},\{3\}\}$.) Let $c(n)$ be the maximum of $|\mathcal{A}||\mathcal{B}|$ for a cancellative pair over an $n$-element set, and let $c_k(n)$ be the maximum considering only $k$-uniform pairs. Similarly to \cite{soltesz}, we prove the following lemma.
\begin{lemma}\label{product}
Let $M$ be a fixed positive integer, and suppose that $\beta>0$ is such that $c_k(n)\leq \beta^n$ for all $k$ divisible by $M$ and for all $n\geq k$. Then $c(n)\leq \beta^n$ for all $n$.	
\end{lemma}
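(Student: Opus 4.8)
The plan is to bootstrap the uniform bound to a bound for general pairs via a tensor-power argument, in the spirit of \cite{soltesz}. Fix an arbitrary cancellative pair $(\mathcal{A},\mathcal{B})$ over $[n]$ with $\mathcal{A},\mathcal{B}$ nonempty (we may assume $n\ge 1$, the case $n=0$ being trivial). For a large integer $t$, let $(\mathcal{A}^{*},\mathcal{B}^{*})$ be the product of $t$ copies of $(\mathcal{A},\mathcal{B})$ together with $t$ copies of $(\mathcal{B},\mathcal{A})$, taken over $2t$ disjoint copies of $[n]$; by the product fact quoted above this is a cancellative pair over a ground set of size $2tn$ with $|\mathcal{A}^{*}||\mathcal{B}^{*}|=(|\mathcal{A}||\mathcal{B}|)^{2t}$. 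The point of mixing in the $(\mathcal{B},\mathcal{A})$-factors is that the ground-set permutation swapping, for each $i$, the $i$-th $(\mathcal{A},\mathcal{B})$-factor with the $i$-th $(\mathcal{B},\mathcal{A})$-factor maps $\mathcal{A}^{*}$ onto $\mathcal{B}^{*}$; in particular $\mathcal{A}^{*}$ and $\mathcal{B}^{*}$ contain the same number of sets of each given size. This symmetry is the crux of the argument, and the main thing to get right.

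Next I would pass to a near-uniform sub-pair by concentration. Choosing a set of $\mathcal{A}^{*}$ uniformly at random amounts to choosing $2t$ independent sets over the $2t$ disjoint ground copies, so its size is a sum of $2t$ independent random variables taking values in $\{0,1,\dots,n\}$, with some mean $\mu$; by Hoeffding's inequality it lies in $[\mu-L,\mu+L]$ with probability at least $1-2/t$, where $L:=n\sqrt{t\ln t}$. The same holds for $\mathcal{B}^{*}$ with the \emph{same} $\mu$, by the symmetry above. Hence some single value $a\in[\mu-L,\mu+L]$ is the size of at least $(1-2/t)(|\mathcal{A}||\mathcal{B}|)^{2t}/(2L+1)$ sets of $\mathcal{A}^{*}$, and some $b\in[\mu-L,\mu+L]$ plays the same role for $\mathcal{B}^{*}$, so that $|a-b|\le 2L$. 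Restricting $\mathcal{A}^{*}$ to its sets of size $a$ and $\mathcal{B}^{*}$ to its sets of size $b$ yields a cancellative pair $(\mathcal{A}',\mathcal{B}')$ (a sub-pair of a cancellative pair is cancellative, since the defining conditions are universally quantified over the families), with $\mathcal{A}'$ being $a$-uniform, $\mathcal{B}'$ being $b$-uniform, both nonempty for $t$ large, and $|\mathcal{A}'||\mathcal{B}'|\ge(1-2/t)^{2}(|\mathcal{A}||\mathcal{B}|)^{2t}/(2L+1)^{2}$.

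Then I would pad to an exactly uniform pair whose size is divisible by $M$. Let $k$ be the least multiple of $M$ with $k\ge\max(a,b)$, so $k<\max(a,b)+M$. Taking the product of $(\mathcal{A}',\mathcal{B}')$ with the (vacuously cancellative) singleton pair $(\{P\},\{Q\})$ over a fresh ground set, where $|P|=k-a\ge 0$ and $|Q|=k-b\ge 0$, gives a $k$-uniform cancellative pair $(\mathcal{A}'',\mathcal{B}'')$ with $|\mathcal{A}''||\mathcal{B}''|=|\mathcal{A}'||\mathcal{B}'|$ over a ground set of size $N:=2tn+\max(k-a,k-b)$, and $N\le 2tn+|a-b|+M\le 2tn+2L+M$. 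Since $M\mid k$ and $N\ge k$ (because $a,b\le 2tn$), the hypothesis gives $|\mathcal{A}''||\mathcal{B}''|\le c_{k}(N)\le\beta^{N}$. Chaining the inequalities, $(|\mathcal{A}||\mathcal{B}|)^{2t}\le (1-2/t)^{-2}(2L+1)^{2}\,\beta^{N}$, hence $|\mathcal{A}||\mathcal{B}|\le\big((1-2/t)^{-2}(2L+1)^{2}\big)^{1/(2t)}\,\beta^{N/(2t)}$. As $t\to\infty$ the first factor tends to $1$, and $N/(2t)=n+\max(k-a,k-b)/(2t)\to n$ since $\max(k-a,k-b)\le 2L+M=n\cdot o(\sqrt{t/\ln t})\cdot 2+M=o(t)$; by continuity of $x\mapsto\beta^{x}$ we get $|\mathcal{A}||\mathcal{B}|\le\beta^{n}$. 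As $(\mathcal{A},\mathcal{B})$ was an arbitrary cancellative pair over $[n]$, this gives $c(n)\le\beta^{n}$.

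The only genuinely non-routine point is the balancing trick in the first paragraph. If one took powers of $(\mathcal{A},\mathcal{B})$ alone, the typical sizes on the $\mathcal{A}$-side and $\mathcal{B}$-side would be about $t\,\mathbb{E}_{\mathcal{A}}|A|$ and $t\,\mathbb{E}_{\mathcal{B}}|B|$, which can differ by as much as $\approx tn$; bringing them to a common size would then enlarge the ground set by $\approx tn$ on top of the $tn$ we started with, yielding only $c(n)\le\beta^{2n}$. Interleaving $t$ copies of $(\mathcal{B},\mathcal{A})$ makes the two size-distributions literally equal, forcing $|a-b|=O(\sqrt{t\log t})=o(t)$, so the ground set grows only like $2tn+o(t)$ — exactly the rate needed for the $2t$-th root to converge to $\beta^{n}$.
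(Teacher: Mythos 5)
Your proof is correct, and it shares the paper's central idea — interleaving copies of $(\mathcal{B},\mathcal{A})$ so that $\mathcal{A}^*$ and $\mathcal{B}^*$ have identical size-distributions — but it then takes a more roundabout route than the paper does. Having established that symmetry, the paper exploits it directly: a single pigeonhole over the $|S|+1$ possible sizes yields one value $k_0$ at which \emph{both} $\mathcal{A}^*$ and $\mathcal{B}^*$ simultaneously retain at least a $1/(|S|+1)$ fraction of their sets, giving a $k_0$-uniform sub-pair with no padding needed; the divisibility $M\mid k$ is then obtained by a final $M$-fold product. You instead allow $a\neq b$, invoke Hoeffding to force $|a-b|=O(\sqrt{t\log t})=o(t)$, and then pad with a singleton pair $(\{P\},\{Q\})$ to reach a common $k$-uniformity (absorbing the $M$-divisibility at the same time). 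This works, and the final limit calculation is sound, but the concentration step is superfluous: since you already observe that the two size-distributions coincide, you could have taken $a=b$ outright and the entire Hoeffding-plus-padding machinery collapses to the paper's simpler pigeonhole. (Two minor slips worth noting: the number of sets of $\mathcal{A}^*$ of size $a$ should be at least $(1-2/t)(|\mathcal{A}||\mathcal{B}|)^{t}/(2L+1)$, not with exponent $2t$ — your subsequent product bound is nevertheless correct; and your justification ``$N\geq k$ because $a,b\leq 2tn$'' is right but compressed — the relevant point is $N=2tn+k-\min(a,b)\geq k$ since $\min(a,b)\leq 2tn$.)
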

\begin{proof}
	Suppose the conditions above are satisfied but $|\mathcal{A}||\mathcal{B}|=\omega^n$ for some $\omega>\beta$. Take the product of $(\mathcal{A},\mathcal{B})$ with (a copy of) $(\mathcal{B},\mathcal{A})$ to get a cancellative pair $\left( \mathcal{A}_{(1)},\mathcal{B}_{(1)}\right) $ over some set $S$ with\\$\left|\mathcal{A}_{(1)}\right|=\left|\mathcal{B}_{(1)}\right|=\omega^{|S|/2}$ and $\mathcal{A}_{(1)}$ and $\mathcal{B}_{(1)}$ containing the same number of sets of size $t$ for any $t$. Also, we can take the product of $\left( \mathcal{A}_{(1)},\mathcal{B}_{(1)}\right)$ with (copies of) itself several times to get a pair with similar properties, so we may assume that $|S|$ is large enough so that $\omega^{|S|}/(|S|+1)^2>\beta^{|S|}$. Take $k_0\in \{0, 1, ..., |S|\}$ such that $\mathcal{A}_{(1)}, \mathcal{B}_{(1)}$ each contain at least $\omega^{|S|/2}/(|S|+1)$ sets of size $k_0$, let $\left(\mathcal{A}_{(2)},\mathcal{B}_{(2)}\right)$ contain only these $k_0$-sets. So $\left|\mathcal{A}_{(2)}\right|\left|\mathcal{B}_{(2)}\right|>\beta^{|S|}$ and $\left(\mathcal{A}_{(2)},\mathcal{B}_{(2)}\right)$ is $k_0$-uniform cancellative. Take the product of $\left(\mathcal{A}_{(2)},\mathcal{B}_{(2)}\right)$ with itself several times to obtain $\left(\mathcal{A}_{(2)}^M,\mathcal{B}_{(2)}^M\right)$, an $(Mk_0)$-uniform cancellative family contradicting our assumptions.
\end{proof}

We also need a simple observation.

\begin{lemma}\label{smallbound}
	If $k$ and $n\geq k$ are positive integers, then
	$c_k(n)\leq 2^{2(n-k)}$. In particular, $c_k(n)\leq 2^n$ for $n\leq 2k$.
\end{lemma}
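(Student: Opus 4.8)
The plan is to bound $c_k(n)$ directly by controlling how sets in $\mathcal{A}$ and $\mathcal{B}$ interact on the ground set. First I would observe that the bound is trivially true when $\mathcal{A}$ or $\mathcal{B}$ is empty, so assume both are nonempty. The key idea is to fix a single set $B_0 \in \mathcal{B}$ and use the cancellative condition in the form (\ref{differencedef}): for $A, A' \in \mathcal{A}$, if $A \setminus B_0 = A' \setminus B_0$ then $A = A'$. Hence the map $A \mapsto A \setminus B_0$ is injective on $\mathcal{A}$. Since every $A \in \mathcal{A}$ has $|A| = k$ and $A \setminus B_0 \subseteq [n] \setminus B_0$, a set of size $n - k$, this gives $|\mathcal{A}| \leq 2^{n-k}$. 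By the symmetric statement (fixing some $A_0 \in \mathcal{A}$ and using injectivity of $B \mapsto B \setminus A_0$), we also get $|\mathcal{B}| \leq 2^{n-k}$. Multiplying, $|\mathcal{A}||\mathcal{B}| \leq 2^{2(n-k)}$, which is the first claim.

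For the "in particular" clause: when $n \leq 2k$ we have $2(n-k) = n + (n - 2k) \leq n$, so $2^{2(n-k)} \leq 2^n$, giving $c_k(n) \leq 2^n$ immediately.

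I do not expect any serious obstacle here; the only thing to be careful about is invoking the equivalent "difference" form of the cancellative property correctly (so that the injectivity is of $A \mapsto A\setminus B_0$ with $B_0$ fixed, not of something weaker), and remembering to handle the degenerate empty-family case separately. One could alternatively phrase the counting as: $\mathcal{A}$ restricted to the coordinates outside $B_0$ is a family of distinct subsets of an $(n-k)$-element set, hence has size at most $2^{n-k}$ — this is the same argument. A mild sanity check is that equality-type behaviour is plausible (e.g. $B_0$ a fixed $k$-set and $\mathcal{A}$ ranging over sets of the form $B_0' \cup T$ in a suitable product construction), so the bound is not obviously lossy in the regime it is meant for.
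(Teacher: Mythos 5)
Your proof is correct and is essentially the paper's argument: fix a set in one family, note by the cancellative (difference) condition that the members of the other family must restrict injectively to the complement, and count subsets of a set of size $n-k$; then multiply the two bounds.
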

\begin{proof}
	Given $A\in\mathcal{A}$, all $B\in\mathcal{B}$ have to differ on the complement of $A$, hence $|\mathcal{B}|\leq 2^{n-k}$. Similarly $|\mathcal{A}|\leq 2^{n-k}$.
\end{proof}

We note that we have equality for $k\leq n\leq 2k$ (i.e. $c_k(n)=2^{2(n-k)}$), even in the symmetric case \cite{franklfuredi}. Also, we could deduce Lemma \ref{smallbound} from (\ref{entropy}), observing that $\sum p_i=\sum q_i=n-k$.\bigskip

In order to state our key proposition, we need a definition. For $\gamma,x\geq 2$, consider the following optimisation problem:
\begin{equation}
\label{optimisation}
\begin{split}
\textnormal{maximize\hspace{12pt}} &\frac{1}{n}\sum_{i=1}^{n} f(p_i,q_i)\\
\textnormal{subject to\hspace{12pt}} &p_iq_i\leq 1/\gamma\hspace{12pt}\textnormal{for } i=1, ..., n\\
&\sum_{i=1}^n p_i=\sum_{i=1}^n q_i \geq n(1-1/x)\\
&0\leq p_i, q_i\leq 1\hspace{12pt}\textnormal{for } i=1, ..., n\\
&n\in\mathbb{N}
\end{split}
\end{equation}
(Note that the positive integer $n$ is not fixed.) We write $\varphi(\gamma,x)$ for the solution (i.e. the supremum) of (\ref{optimisation}).

\begin{proposition} \label{inductivebound}
	Suppose $k$ is a positive integer, $2\leq\lambda$ such that $\lambda k$ is an integer, and $2\leq r_1\leq \gamma$. Suppose that $c_k(\lambda k)\leq r_1^{\lambda k}$ and
\begin{equation}\label{inductiveineq}
	r_{1}\geq 2^{\varphi(\gamma,\lambda)}.
\end{equation}
Then, for $\lambda k\leq n$,
$$c_k(n)\leq r_1^{\lambda k}\gamma^{n-\lambda k}.$$
In particular, if $\mu>\lambda$, $\mu k$ is an integer and $r_2=r_1^{\lambda/\mu}\gamma^{1-\lambda/\mu}$, then $c_k(n)\leq r_{2}^n$ for $\lambda k\leq n\leq \mu k$.
\end{proposition}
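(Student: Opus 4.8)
The plan is to prove the main bound $c_k(n)\le r_1^{\lambda k}\gamma^{n-\lambda k}$ by induction on the integers $n\ge\lambda k$, and then obtain the final sentence by a one-line comparison of exponents. The base case $n=\lambda k$ is exactly the hypothesis $c_k(\lambda k)\le r_1^{\lambda k}$. For the inductive step, fix an integer $n>\lambda k$ and a nonempty $k$-uniform cancellative pair $(\mathcal{A},\mathcal{B})$ over $[n]$, and define $p_i,q_i$ as in the statement; since every member has size $k$, it is missing from exactly $n-k$ coordinates, so $\sum_{i=1}^n p_i=\sum_{i=1}^n q_i=n-k$.

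The heart of the argument is a dichotomy according to the sizes of the products $p_iq_i$. Suppose first that some coordinate $i$ satisfies $p_iq_i\ge 1/\gamma$; then $p_i,q_i>0$, and the subfamilies $\mathcal{A}_i$, $\mathcal{B}_i$ (the sets avoiding $i$), viewed over $[n]\setminus\{i\}$, still form a $k$-uniform cancellative pair, so $|\mathcal{A}_i||\mathcal{B}_i|\le c_k(n-1)$. Since $|\mathcal{A}||\mathcal{B}|=|\mathcal{A}_i||\mathcal{B}_i|/(p_iq_i)\le\gamma\,c_k(n-1)$ and $n-1\ge\lambda k$, the induction hypothesis gives $|\mathcal{A}||\mathcal{B}|\le\gamma\cdot r_1^{\lambda k}\gamma^{n-1-\lambda k}=r_1^{\lambda k}\gamma^{n-\lambda k}$. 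Suppose instead that $p_iq_i<1/\gamma$ for every $i$. Because $\sum p_i=\sum q_i=n-k\ge n(1-1/\lambda)$ (this uses $n\ge\lambda k$), the point $(p_i,q_i)_{i=1}^n$, with this value of $n$, is feasible for the optimisation problem (\ref{optimisation}) with parameters $\gamma$ and $x=\lambda$, so $\tfrac1n\sum_i f(p_i,q_i)\le\varphi(\gamma,\lambda)$. Feeding this into the entropy bound (\ref{entropy}) and using (\ref{inductiveineq}) yields $\log_2(|\mathcal{A}||\mathcal{B}|)\le n\,\varphi(\gamma,\lambda)\le n\log_2 r_1$, i.e. $|\mathcal{A}||\mathcal{B}|\le r_1^n$, which is at most $r_1^{\lambda k}\gamma^{n-\lambda k}$ because $r_1\le\gamma$ and $n\ge\lambda k$. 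In both cases the desired bound holds, completing the induction.

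For the final sentence, let $\lambda k\le n\le\mu k$. The bound just proved gives $c_k(n)\le r_1^{\lambda k}\gamma^{n-\lambda k}$, and a direct calculation shows
\[
\log_2\!\big(r_2^{\,n}\big)-\log_2\!\big(r_1^{\lambda k}\gamma^{n-\lambda k}\big)=\lambda\Big(\tfrac{n}{\mu}-k\Big)\log_2\!\big(r_1/\gamma\big),
\]
which is $\ge 0$ since $n\le\mu k$ forces $n/\mu-k\le 0$ while $r_1\le\gamma$ forces $\log_2(r_1/\gamma)\le 0$. Hence $c_k(n)\le r_2^{\,n}$ on this range.

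I expect the only genuine difficulty to be isolating the right dichotomy: that a coordinate whose product $p_iq_i$ is not too small can be deleted at multiplicative cost $\gamma$ while staying $k$-uniform, and that if no such coordinate exists then the constraints of (\ref{optimisation}) are satisfied, so the entropy bound (\ref{entropy}) already suffices. The remaining ingredients — that passing to the sets avoiding a coordinate preserves the cancellative and uniform properties, the exponent bookkeeping, and the closing computation involving $r_2$ — are all routine.
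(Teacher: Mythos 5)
Your proof is correct and takes essentially the same approach as the paper: the paper frames it as a minimal-counterexample argument (a counterexample would force $c_k(n)/c_k(n-1)>\gamma$, hence $p_iq_i<1/\gamma$ for all $i$, after which the entropy bound and $\varphi$ give a contradiction), whereas you phrase it as a direct induction with an explicit dichotomy on whether some $p_iq_i\geq 1/\gamma$, but these are the same argument in two different guises. The closing comparison of $r_2^n$ with $r_1^{\lambda k}\gamma^{n-\lambda k}$ also matches the paper's one-line exponent calculation.
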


\begin{proof}
	Notice that $\gamma\geq r_2\geq r_1$.
We know the given inequality holds for $n=\lambda k$. Suppose it is false for some $n$, $\lambda k+1\leq n$, $n$ minimal.\\
Then $c_k(n)/c_k(n-1)>\gamma$. So we must have $p_iq_i<1/\gamma$ (or else $|\mathcal{A}_{i}||\mathcal{B}_{i}|>c_k(n-1)$ and $(\mathcal{A}_i,\mathcal{B}_i)$ is cancellative).\\
We also have $\sum p_i=\sum q_i=n-k=n(1-k/n)\geq n(1-1/\lambda)$. Hence $\sum f(p_i,q_i)\leq n\varphi(\gamma,\lambda)$ (by the definition of $\varphi$).
So then, by (\ref{entropy}), we get
$$|\mathcal{A}||\mathcal{B}|\leq 2^{n\varphi(\gamma,\lambda)}\leq r_{1}^n\leq r_1^{\lambda k}\gamma^{n-\lambda k},$$
contradiction.\\
For $\lambda k\leq n\leq \mu k$, we have $c_k(n)^{1/n}\leq (r_1/\gamma)^{\lambda k/n}\gamma\leq (r_1/\gamma)^{\lambda/\mu}\gamma=r_2$.
\end{proof}

Proposition \ref{inductivebound} enables us to implement the following method.
Let $2=\lambda_0<\lambda_1<...<\lambda_N$, and let $\rho_0=2$.\\
Using a computer program, we find some $\rho_1\geq\rho_0$, then $\rho_2\geq\rho_1$, and so on, finally $\rho_N$, such that the conditions of Proposition \ref{inductivebound} hold for $\lambda=\lambda_i$, $\mu=\lambda_{i+1}$, $r_1=\rho_i$, $r_2=\rho_{i+1}$ ($i=0, 1, ..., N-1$). So then $c_k(n)\leq\rho_N^n$ for $n/k\leq \lambda_N$.\medskip

To be able to apply this method, we make the following observations.\vspace{-3pt}
\begin{enumerate}
	\item If $\lambda_i$ is rational for all $i$, then we are allowed to assume that $\lambda_i k$ is an integer (since we may assume $M$ divides $k$ for any fixed $M$ positive integer).\vspace{-6pt}
	\item We do not need to consider $n/k>3.6$. Indeed, for $n/k>3.6$ we have $p_i+q_i>2(1-1/3.6)=13/9$ for some $i$, so then $p_iq_i>1\cdot 4/9=1/2.25$. Hence $c_k(n)< 2.25c_k(n-1)$, as $(\mathcal{A}_i,\mathcal{B}_i)$ is cancellative.\vspace{-6pt}
	\item We need to find an upper bound on $\varphi(\gamma,x)$. Details on how this is done are given in the Appendix, however, we note the following simple result.\\
	Let $\gamma\geq 2.25$, $x\geq 2$ and let $(p_0,q_0)$ satisfy $p_0+q_0=2(1-1/x)$ and $p_0q_0=1/\gamma$.\\
	If $0\leq p_0, q_0\leq 1$, $p_0\not=q_0$, then $\varphi(\gamma,x)=f(p_0,q_0)$.
\end{enumerate}

Now we are ready to prove our result using the method described above. Choose, for example, $N=100000$ and $\lambda_i=2+i(3.6-2)/N$. Then find appropriate values of $\rho_1, ..., \rho_N$ using a computer program. Details about our implementation are given in the Appendix. Our program gives $\rho_N=2.268166...$, whence $c_k(n)\leq 2.2682^n$ for all $n$ (and $k$ a multiple of an appropriate $M$). By Lemma \ref{product}, we get our main result.
\begin{theorem}
	For a cancellative pair $(\mathcal{A},\mathcal{B})$ over an $n$-element set, we have $|\mathcal{A}||\mathcal{B}|\leq 2.2682^n$.\qed
\end{theorem}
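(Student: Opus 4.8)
The plan is to obtain the theorem by combining the results above with the numerical computation. By Lemma~\ref{product} it suffices to fix a positive integer $M$ and prove $c_k(n)\le 2.2682^{\,n}$ for every $n\ge k$ whenever $M\mid k$; the general statement $|\mathcal{A}||\mathcal{B}|\le 2.2682^{\,n}$ (that is, $c(n)\le 2.2682^{\,n}$) then follows at once. So write $\beta=2.2682$ and split according to the ratio $n/k$. When $k\le n\le 2k$, Lemma~\ref{smallbound} already gives $c_k(n)\le 2^{2(n-k)}\le 2^n\le\beta^n$. When $n/k>3.6$ we have $\sum_i(p_i+q_i)=2(n-k)>n\cdot 13/9$, so some coordinate $i$ has $p_i+q_i>2(1-1/3.6)=13/9$, hence $p_iq_i>4/9=1/2.25$; since $(\mathcal{A}_i,\mathcal{B}_i)$ is a $k$-uniform cancellative pair on $n-1$ points, this gives $c_k(n)\le 2.25\,c_k(n-1)$, and as $2.25<\beta$, once $c_k(n)\le\beta^n$ is known for $n$ the largest integer with $n\le 3.6k$ it propagates to all larger $n$. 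Thus everything reduces to the window $2\le n/k\le 3.6$.

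On that window I would run the iterative scheme that Proposition~\ref{inductivebound} is designed for. Fix $N$ large (e.g. $N=100000$) and set $\lambda_i=2+1.6i/N=2+i/62500$ for $i=0,1,\dots,N$, so $\lambda_N=3.6$; since every $\lambda_i$ is rational with denominator dividing $62500$, we may take $M=62500$ (or any multiple), which makes each $\lambda_i k$ an integer, as Proposition~\ref{inductivebound} requires. Put $\rho_0=2$; by Lemma~\ref{smallbound}, $c_k(\lambda_0 k)=c_k(2k)\le 2^{2k}=\rho_0^{\lambda_0 k}$. Inductively, given $\rho_i\ge 2$ with $c_k(\lambda_i k)\le\rho_i^{\lambda_i k}$, I would use the program to find $\gamma_i\ge\rho_i$ with $2^{\varphi(\gamma_i,\lambda_i)}\le\rho_i$, taking $\gamma_i$ as small as possible (this is legitimate since $\varphi(\gamma,x)$ is non-increasing in $\gamma$), and then set $\rho_{i+1}=\rho_i^{\lambda_i/\lambda_{i+1}}\gamma_i^{\,1-\lambda_i/\lambda_{i+1}}$. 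Proposition~\ref{inductivebound}, applied with $\lambda=\lambda_i$, $\mu=\lambda_{i+1}$, $r_1=\rho_i$, this $\gamma=\gamma_i$ and $r_2=\rho_{i+1}$, yields $c_k(n)\le\rho_{i+1}^{\,n}$ for $\lambda_i k\le n\le\lambda_{i+1}k$, in particular $c_k(\lambda_{i+1}k)\le\rho_{i+1}^{\lambda_{i+1}k}$, which closes the induction. After the $N$ steps one obtains $c_k(n)\le\rho_N^{\,n}$ for $2k\le n\le 3.6k$, and the point of choosing $N$ large and computing with care is to force $\rho_N<\beta$. Feeding $\rho_N$ back into the first paragraph and then invoking Lemma~\ref{product} completes the proof.

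The real work — and the only genuine obstacle — is the rigorous upper bounding of $\varphi(\gamma,x)$, which enters at every step and is what the Appendix must supply. Problem~(\ref{optimisation}) ranges over an \emph{unbounded} number of pairs $(p_i,q_i)$, so the first task is to show its value is attained (or approached arbitrarily closely) by configurations using only boundedly many distinct pairs; a smoothing/convexity analysis of $f(p,q)=ph(q)+qh(p)$ along the active constraints $pq=1/\gamma$ and $\sum p_i=\sum q_i=n(1-1/x)$ should collapse the problem to a one- or two-parameter maximisation that can then be bounded by certified (interval) arithmetic. The special case recorded above — for $\gamma\ge 2.25$, $x\ge 2$, with $(p_0,q_0)$ the solution of $p_0+q_0=2(1-1/x)$, $p_0q_0=1/\gamma$ satisfying $0\le p_0,q_0\le 1$ and $p_0\ne q_0$, one has $\varphi(\gamma,x)=f(p_0,q_0)$ — is the clean instance of this reduction, and within the parameter box actually used the Appendix presumably shows that this (or a similarly explicit description) always governs the optimum, or else controls the remaining gap. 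The second, more routine difficulty is simply carrying out the $N\approx 10^5$ iterations with certified numerics so that accumulated rounding cannot push $\rho_N$ above $\beta$; the gap between the reported value $\rho_N=2.268166\ldots$ and $\beta=2.2682$ is precisely the margin available for this.
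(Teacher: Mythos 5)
Your proposal reproduces the paper's proof faithfully: the same reduction via Lemma~\ref{product} to $k$-uniform pairs with $M\mid k$, the same base case from Lemma~\ref{smallbound}, the same observation disposing of $n/k>3.6$, the same iterative application of Proposition~\ref{inductivebound} along a fine grid $\lambda_i=2+1.6i/N$ with greedily minimal $\gamma_i$ (equivalently minimal $\rho_{i+1}$), and the same appeal to a rigorous upper bound on $\varphi(\gamma,x)$ (the paper's Appendix, culminating in the explicit formula $\varphi(\gamma,x)=f(p_0,q_0)$ under the stated conditions). The proposal is correct and essentially identical in approach.
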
\vspace{3pt}
\section{Remarks}
\paragraph{Uniform constructions}
We now discuss how our upper bound on $c_k(n)$ is related to the best known $k$-uniform constructions as $n/k$ varies.
Tolhuizen \cite{tolhuizen} gave a family of symmetric $k$-uniform pairs for all values of $k$ and $n$ having
$|\mathcal{A}|\geq \nu \binom{n}{k}2^{-k}$, where $\nu$ is a constant. It follows that for $n/k=x>2$, we have $$c_k(n)^{1/n}\geq 2^{2(h(1/x)-1/x)+o(1)}.$$
This construction is known to be asymptotically optimal in the symmetric $k$-uniform case \cite{franklfuredi, tolhuizen}.\\(As pointed out after Lemma \ref{smallbound}, the exact value of $c_k(n)$ is known for $n/k\leq 2$.)\medskip

Figure \ref{graph} shows the upper bound we obtain by the argument above for $c_k(n)^{1/n}$, together with the lower bound from Tolhuizen's construction ($n/k$ fixed, $n$ large). We note that, with a slight modification of Proposition \ref{inductivebound}, our upper bound could be decreased for $n/k$ large (instead of becoming constant at the maximum value). However, this would not improve our constant of $2.2682$, and it requires more care to find bounds for the optimization problem (\ref{optimisation}) when $\gamma$ is small.\vspace{-2pt}
\begin{figure}[h!]
	\includegraphics[clip,trim=1.5cm 6.7cm 1cm 8.7cm, width=0.69\linewidth]{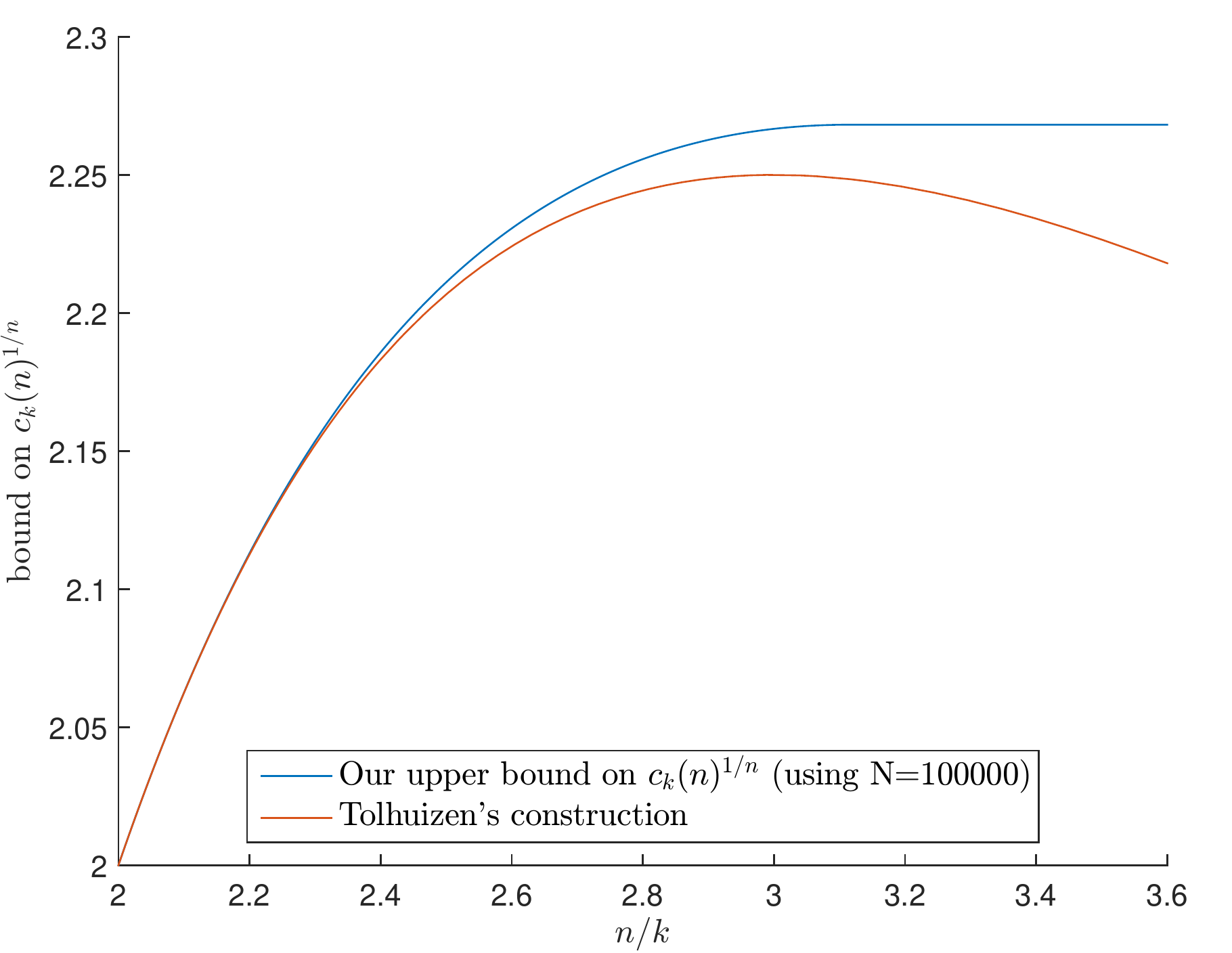}
	\centering
	\caption{Graphical representation of the lower and upper bounds for uniform pairs.}
	\label{graph}
\end{figure}

\paragraph{The symmetric case} In the case $\mathcal{A}=\mathcal{B}$, an argument similar to the one considered above gives the best possible bound of $2.25^n$. In fact, our argument is equivalent to that of Frankl and F\"uredi \cite{franklfuredi}. For convenience, we consider $G_k(n)$, the largest possible size of $\mathcal{A}$ if $(\mathcal{A},\mathcal{A})$ is $k$-uniform cancellative. (So then $c_k(n)\geq G_k(n)^2$.)\\
In this case, we have $p_i=q_i$ for each $i$. If $G_k(n)/G_k(n-1)=\gamma$, then $p_i\leq 1/\gamma$ for all $i$. But $\sum p_i=n-k$, hence $\gamma\leq\frac{n}{n-k}$. As $G_k(2k)\leq 2^{k}$, induction gives (for $n\geq 2k$) \vspace{-1pt} $$G_k(n)\leq 2^k{\binom{n}{k}}\bigg/{\binom{2k}{k}}$$\\
This is exactly the formula obtained by Frankl and F\"uredi \cite{franklfuredi}. This is not surprising: their argument is essentially the same, but instead of removing elements one-by-one (i.e. inducting from $n-1$ to $n$), they consider a random set of size $2k$. (It is not hard to deduce the bound $(3/2)^{2n}$ for symmetric pairs from here, noticing that subexponential factors can be ignored by a product argument. The asymptotic optimality of Tolhuizen's construction for $k$-uniform symmetric cancellative pairs ($n\to\infty$, $n/k\to x>2$) also follows \cite{tolhuizen}.)\bigskip

\paragraph{Recovering pairs}
Since any recovering pair is also cancellative, the result above immediately gives the following corollary.
\begin{corollary}
	For a recovering pair $(\mathcal{A},\mathcal{B})$ over an $n$-element set, we have $|\mathcal{A}||\mathcal{B}|\leq 2.2682^n$.\qed
\end{corollary}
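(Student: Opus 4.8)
The plan is to work entirely with the uniform quantities $c_k(n)$ and then pass to the general case via Lemma~\ref{product}. Fix $\beta=2.2682$. By Lemma~\ref{product} it suffices to exhibit a positive integer $M$ such that $c_k(n)\le\beta^n$ whenever $M\mid k$ and $n\ge k$; imposing divisibility is harmless, and by observation~(1) it also lets us assume that $\lambda_i k\in\mathbb{N}$ for all the finitely many rational ratios $\lambda_i=2+i(3.6-2)/N$ (with $N=100000$) that appear below --- it is enough to take $M$ divisible by $62500$, so that in particular $3.6k\in\mathbb{N}$.

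First I would dispose of the two extreme regimes. For $k\le n\le 2k$, Lemma~\ref{smallbound} already gives $c_k(n)\le 2^{2(n-k)}\le 2^n<\beta^n$. For $n\ge 3.6k$, observation~(2) shows that some coordinate has $p_iq_i>1/2.25$, and since $(\mathcal{A}_i,\mathcal{B}_i)$ is $k$-uniform cancellative over $n-1$ elements we get $c_k(n)<2.25\,c_k(n-1)$; so once we have a bound $c_k(3.6k)\le\rho_N^{3.6k}$ at the threshold with $\rho_N>2.25$, the bound $c_k(n)\le\rho_N^n$ propagates to every integer $n\ge 3.6k$.

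The core of the argument is the middle regime $2\le n/k\le 3.6$, handled by iterating Proposition~\ref{inductivebound} along the mesh $\lambda_0=2<\lambda_1<\dots<\lambda_N=3.6$. We start from $\rho_0=2$ with the base case $c_k(2k)\le 2^{2k}=\rho_0^{2k}$ from Lemma~\ref{smallbound}. Inductively, suppose $c_k(n)\le\rho_i^n$ for $\lambda_{i-1}k\le n\le\lambda_i k$; a computer search then produces a value $\gamma=\gamma_i$ with $2\le\rho_i\le\gamma_i$ for which the hypothesis $\rho_i\ge 2^{\varphi(\gamma_i,\lambda_i)}$ of Proposition~\ref{inductivebound} is rigorously verified (using observation~(3), or the more refined bounds of the Appendix, to estimate $\varphi(\gamma_i,\lambda_i)$ from above). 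Setting $\rho_{i+1}=\rho_i^{\lambda_i/\lambda_{i+1}}\gamma_i^{1-\lambda_i/\lambda_{i+1}}\ge\rho_i$, the ``in particular'' clause of Proposition~\ref{inductivebound} yields $c_k(n)\le\rho_{i+1}^n$ for $\lambda_i k\le n\le\lambda_{i+1}k$. Chaining through $i=0,\dots,N-1$ gives $c_k(n)\le\rho_N^n$ on $2k\le n\le 3.6k$; combining with the two extreme regimes, $c_k(n)\le\max(\beta,\rho_N)^n$ for all $n\ge k$ with $M\mid k$. The computation outputs $\rho_N=2.268166\ldots<\beta$, and Lemma~\ref{product} then gives $|\mathcal{A}||\mathcal{B}|\le\beta^n=2.2682^n$ for every cancellative pair, as claimed.

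The main obstacle is making the step ``a computer search produces a suitable $\gamma_i$'' genuinely rigorous at each of the $N$ mesh points: this requires a certified upper bound for $\varphi(\gamma,x)$, the value of the (a priori unbounded-dimension) optimisation problem~(\ref{optimisation}). The key is to reduce this problem to a tractable one-variable expression --- exploiting the extremal structure of $f$, which forces the optimum to be concentrated on at most a couple of distinct coordinate values, as exemplified by observation~(3) --- and then to bound that expression with guaranteed numerical error, so that each verified inequality $\rho_i\ge 2^{\varphi(\gamma_i,\lambda_i)}$ is provably valid; the details of this reduction and of the implementation are deferred to the Appendix.
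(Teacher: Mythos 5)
Your proof never addresses the actual content of the corollary, which concerns \emph{recovering} pairs, not cancellative pairs. The entire argument you give is a re-derivation of the main Theorem (the $2.2682^n$ bound for cancellative pairs), and indeed your final sentence concludes ``for every cancellative pair, as claimed'' --- but that is not what is claimed. The corollary asserts the bound for recovering pairs, and the one and only step required is the observation that any recovering pair is cancellative (compare the definition of recovering with the reformulation~(\ref{differencedef}) of cancellative: the recovering conditions are just the cancellative conditions with the quantifier over $B$ (resp.\ $A$) strengthened to a quantifier over pairs $B,B'$, so recovering is formally stronger). That observation is stated in the paper's introduction and is exactly what the paper's proof of the corollary consists of --- it is an immediate consequence of the Theorem. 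Your proposal omits this link entirely, so as written it proves the Theorem (which is already proved) rather than the Corollary; at the same time it expends great effort reproducing the Theorem's proof, which is unnecessary since the Theorem can simply be cited. You should replace the whole argument by: a recovering pair is cancellative, so the bound follows from the Theorem.
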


We remark that a bound stronger than $2^{2k}$ for $k$-uniform recovering pairs over a $2k$-element set would give a stronger bound on the maximal value of $|\mathcal{A}||\mathcal{B}|$ using the argument above (we could choose $\rho_0$ to be smaller). Note that the product of recovering families is recovering \cite{soltesz}, so our arguments would still be valid.

\section*{Appendix}
The appendix contains two main parts. In the first part, we give bounds for $\varphi(\gamma,x)$. In the second part, we briefly describe how we implement our argument using a computer program.
\subsection*{Bounding the optimisation problem}

\begin{lemma} \label{whereistheoptimum}
	Suppose $\gamma\geq 2.25$ and $\kappa\geq 0$. Then the maximizer $(p,q)$ of $L_\kappa(p,q)=f(p,q)+\kappa (p+q)$ in the range $0\leq p,q\leq 1$, $pq\leq 1/\gamma$ satisfies $pq=1/\gamma$.
\end{lemma}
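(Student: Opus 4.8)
The plan is to show that $L_\kappa$ cannot attain its maximum over the feasible region $R=\{(p,q):0\le p,q\le 1,\ pq\le 1/\gamma\}$ at a point where $pq<1/\gamma$. Since $R$ is compact and $L_\kappa$ is continuous, a maximiser $(p^*,q^*)$ exists, and I would first dispatch the boundary of the unit square. On the edges $\{p=0\}$ and $\{q=0\}$ one has $L_\kappa\le\kappa$, which is beaten by the feasible point $(1/\gamma,1)$, where $L_\kappa=h(1/\gamma)+\kappa(1+1/\gamma)>\kappa$ (note $1/\gamma\le 1/2.25<1$, so $h(1/\gamma)>0$). On the edges $\{p=1\}$ and $\{q=1\}$ the assumption $pq<1/\gamma$ forces the free coordinate $t$ to satisfy $t<1/\gamma<1/2$, and there $L_\kappa=h(t)+\kappa(1+t)$ has derivative $\log_2\frac{1-t}{t}+\kappa>0$, so the maximum along such an edge is pushed to $t=1/\gamma$, i.e.\ onto $\{pq=1/\gamma\}$; the only remaining corner, $(1,1)$, is infeasible. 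Hence a maximiser with $p^*q^*<1/\gamma$ would have to lie in the open square.

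Next I would invoke first-order optimality: an interior maximiser of the smooth function $L_\kappa$ satisfies $\nabla L_\kappa=0$, that is (using $\frac{d}{dt}h(t)=\log_2\frac{1-t}{t}$),
\[
h(q^*)+q^*\log_2\frac{1-p^*}{p^*}+\kappa=0,\qquad h(p^*)+p^*\log_2\frac{1-q^*}{q^*}+\kappa=0 .
\]
Since $h(p^*),h(q^*)>0$ and $\kappa\ge 0$, each equation forces its logarithm to be negative, so $p^*,q^*>1/2$. The heart of the proof is then the claim that \emph{any} $(p,q)\in(1/2,1)^2$ satisfying these two equations for some $\kappa\ge 0$ has $pq\ge 1/2.25$. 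Granting this, $p^*q^*\ge 1/2.25\ge 1/\gamma$ contradicts $p^*q^*<1/\gamma$, and we conclude $p^*q^*=1/\gamma$.

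To prove the claim I would rewrite the equations as $h(q)/q\le\log_2\frac{p}{1-p}$ and $h(p)/p\le\log_2\frac{q}{1-q}$ (using $\kappa\ge 0$) and assume without loss of generality $p\le q$. Then $h(q)/q\le\log_2\frac{p}{1-p}\le\log_2\frac{q}{1-q}$, i.e.\ $\Phi'(q)\le 0$ where $\Phi(t)=t\,h(t)$; a short computation shows $\Phi'>0$ on $(0,1/2]$ and $\Phi''<0$ on $(1/2,1)$, so $\Phi$ has a unique interior maximiser $t^*$, which one checks satisfies $t^*>0.7$, whence $q\ge t^*>0.7$. Now I would bootstrap on $p$ and $q$: from $p>1/2$ and $pq<1/2.25$ we get $q<8/9$; feeding this into $h(q)/q\le\log_2\frac{p}{1-p}$ and using that $t\mapsto h(t)/t$ is decreasing on $(1/2,1)$ yields a lower bound on $p$, hence (via $pq<1/2.25$) a sharper upper bound on $q$, hence a sharper lower bound on $p$; two rounds of this estimate push the upper bound on $q$ below $0.7$, contradicting $q>0.7$. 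The numerical slack is large — the extremal interior critical point is the symmetric one, with $pq\approx 0.494$ — so only crude bounds on $h$ and on $t^*$ are needed.

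I expect the main obstacle to be exactly this last claim, that every interior critical point of $L_\kappa$ (for $\kappa\ge 0$) lies in $\{pq\ge 1/2.25\}$; this is also what pins down the constant $2.25$ in the hypothesis. The estimate is not tight, but because the extremum occurs on the diagonal, getting a clean argument for the asymmetric case is the one real subtlety — either the bootstrapping above, or a one-variable reduction (restricting to $p=q$ and handling the off-diagonal possibility separately). Everything else is routine: compactness, the edge analysis, and the $\nabla L_\kappa=0$ step are all straightforward.
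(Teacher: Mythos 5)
Your argument is correct, but it takes a genuinely different route from the paper's. The paper gives a direct monotonicity proof: assuming $p\le q$ and $pq<1/\gamma$, it shows $\partial L_\kappa/\partial p>0$ directly, so the maximizer can always be pushed toward $\{pq=1/\gamma\}$. The key trick is to use the constraint to bound $q\le 1/(2.25p)$ and then use that $h$ is decreasing on $[1/2,1]$ to reduce the whole thing to a one-variable positivity check of $h\bigl(\tfrac{1}{2.25p}\bigr)+\tfrac{h'(p)}{2.25p}$ on $[1/2,2/3]$ (the case $p<1/2$ being trivial since $h'(p)\ge 0$ there). That is essentially a three-line proof. Your proof instead runs a full KKT-style analysis: compactness for existence, elimination of the square's boundary, reduction to interior critical points, the observation that $\kappa\ge0$ forces $p,q>1/2$, and then a standalone claim that every interior critical point of $L_\kappa$ with $\kappa\ge0$ has $pq\ge 1/2.25$, proved via the auxiliary function $\Phi(t)=t\,h(t)$ and a two-round bootstrap using the monotonicity of $h(t)/t$. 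I checked the numerics and the bootstrap does close in two rounds (roughly $q<8/9\Rightarrow p>0.597\Rightarrow q<0.745\Rightarrow p>0.682\Rightarrow q<0.653<0.7$, while $t^*>0.7$). Both approaches are sound; the paper's is shorter and avoids the boundary and critical-point case analysis entirely, but yours makes the role of the constant $2.25$ more transparent by isolating it in a clean structural claim about the critical points.
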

\begin{proof}
	Consider the maximizer. We may assume $p\leq q$. We show that if $pq<1/\gamma$ then $\partial g/\partial p>0$. We have
$$\partial L_\kappa/\partial p=h(q)+qh'(p)+\kappa\geq h(q)+qh'(p).$$
If $p< 1/2$ then this is positive. If $p\geq 1/2$, then
$$\partial L_\kappa/\partial p\geq h\left(\frac{1}{2.25p}\right)+\frac{h'(p)}{2.25p}$$
which is positive on $[1/2,2/3]$.
\end{proof}
\begin{lemma} \label{Lagrangesuff}
	Suppose $\kappa\geq 0$, $\gamma\geq 2.25$, $x\geq 2$ and assume that for $0\leq p, q\leq 1$, $pq=1/\gamma$ the maximum of \\$L_\kappa(p,q)=f(p,q)+\kappa(p+q)$ is $\psi(\gamma, x, \kappa)$. Then $\varphi(\gamma,x)\leq \psi(\gamma,x,\kappa)-2\kappa(1-1/x)$.
\end{lemma}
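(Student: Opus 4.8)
The plan is to use the Lagrangian relaxation / weak duality idea: adding a multiple of the (relaxed) equality constraint $\sum p_i = \sum q_i \ge n(1-1/x)$ into the objective can only increase the value of the optimisation problem (\ref{optimisation}), provided the multiplier has the right sign so that the term is nonnegative on the feasible region. Concretely, I would start from an arbitrary feasible point $(p_1,q_1),\dots,(p_n,q_n)$ for (\ref{optimisation}), so that each pair satisfies $0\le p_i,q_i\le 1$ and $p_iq_i\le 1/\gamma$, and $\sum_i p_i = \sum_i q_i =: T$ with $T\ge n(1-1/x)$.

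The key computation is then the chain
\begin{equation*}
\sum_{i=1}^n f(p_i,q_i) = \sum_{i=1}^n \big(f(p_i,q_i)+\kappa(p_i+q_i)\big) - \kappa\sum_{i=1}^n(p_i+q_i) = \sum_{i=1}^n L_\kappa(p_i,q_i) - 2\kappa T.
\end{equation*}
Each term $L_\kappa(p_i,q_i)$ is bounded above by the maximum of $L_\kappa$ over $\{0\le p,q\le1,\ pq\le 1/\gamma\}$; by Lemma \ref{whereistheoptimum} (using $\gamma\ge 2.25$, $\kappa\ge0$) that maximum is attained on the curve $pq=1/\gamma$, hence equals $\psi(\gamma,x,\kappa)$. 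Therefore $\sum_i L_\kappa(p_i,q_i)\le n\,\psi(\gamma,x,\kappa)$. For the remaining term, since $\kappa\ge0$ and $T\ge n(1-1/x)$, we have $-2\kappa T \le -2\kappa n(1-1/x)$. Combining,
\begin{equation*}
\frac1n\sum_{i=1}^n f(p_i,q_i) \le \psi(\gamma,x,\kappa) - 2\kappa(1-1/x).
\end{equation*}
Taking the supremum over all feasible points (and all $n$) gives $\varphi(\gamma,x)\le \psi(\gamma,x,\kappa)-2\kappa(1-1/x)$, as required.

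The only genuine subtlety — and the one point where the hypotheses really get used — is the invocation of Lemma \ref{whereistheoptimum} to identify $\max L_\kappa$ on the constraint set with $\psi(\gamma,x,\kappa)$: we need to know the unconstrained-in-$pq$ maximum does not occur strictly inside the region $pq<1/\gamma$, which is exactly what that lemma supplies for $\gamma\ge 2.25$, $\kappa\ge0$. Everything else is just bookkeeping: checking the sign of $\kappa$ so that dropping the constraint $T\ge n(1-1/x)$ down to equality is a valid upper-bounding step, and noting that $\psi$ is by definition the relevant maximum of $L_\kappa$ on $pq=1/\gamma$. I do not anticipate any real obstacle beyond making sure these sign conventions line up, since the structure is precisely Lagrangian weak duality for a separable problem.
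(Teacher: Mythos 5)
Your proof is correct and takes essentially the same approach as the paper: split $f = L_\kappa - \kappa(p+q)$, bound the linear term using $\kappa\ge 0$ and the constraint $\sum p_i = \sum q_i \ge n(1-1/x)$, and bound each $L_\kappa(p_i,q_i)$ by $\psi(\gamma,x,\kappa)$ via Lemma \ref{whereistheoptimum}. The paper states this in one displayed inequality plus a sentence, while you spell out the bookkeeping, but the argument is identical.
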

\begin{proof}
	If $(p_i)_{i=1}^n$, $(q_i)_{i=1}^n$ satisfy the constraints of ($\ref{optimisation}$), then
$$\frac{1}{n}\sum_{i=1}^{n} f(p_i,q_i)\leq \frac{1}{n}\sum_{i=1}^{n} {(f(p_i,q_i)+\kappa (p_i+q_i))}-\frac{1}{n}\kappa\cdot 2n(1-1/x).$$
Using Lemma \ref{whereistheoptimum} and our assumptions above, the result follows.
\end{proof}

\begin{lemma}\label{sigmaopt} Suppose $\kappa\geq 0$, $q=q(p)=1/(\gamma p)$, and $(p_0,q_0)$ satisfy $p_0q_0=1/\gamma$, $0\leq p_0,q_0\leq 1$ and
$$\kappa=\frac{p_0q_0}{\log{2}}\frac{g(p_0)-g(q_0)}{q_0-p_0}$$
where $g(x)=\frac{\log(1-x)}{x}$. Then $L_\kappa(p,q(p))$ is maximal at $(p_0,q_0)$.
\end{lemma}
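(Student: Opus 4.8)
The plan is to reduce the statement to a one-variable calculus problem. Write $F(p) := L_\kappa\bigl(p, q(p)\bigr)$ with $q(p) = 1/(\gamma p)$; the natural domain is $p \in [1/\gamma, 1]$, which is exactly where $q(p) \in [1/\gamma, 1]$, and on this compact interval $F$ is continuous and so attains a maximum. Since $f(p,q)$ and $p+q$ are symmetric in $p$ and $q$ and the constraint curve $pq = 1/\gamma$ is symmetric, $F$ satisfies $F(p) = F\bigl(q(p)\bigr)$; so it is enough to locate the maximum on the half-interval $[1/\gamma, 1/\sqrt{\gamma}]$, on which $p \le q(p)$.

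First I would differentiate. From $q(p) = 1/(\gamma p)$ one has $q'(p) = -q(p)/p$, and using $h'(x) = \log_2\frac{1-x}{x}$ together with the identity $h(x) - x h'(x) = -\frac{x g(x)}{\log 2}$ (a direct computation from the definitions of $h$ and $g$), a short computation — multiplying through by $p$ and using $p\,q(p) = 1/\gamma$ — gives
\[
p\,F'(p) \;=\; \frac{pq}{\log 2}\bigl(g(p) - g(q)\bigr) - \kappa\,(q - p), \qquad q = q(p).
\]
Since $p_0 q_0 = 1/\gamma$, this shows $F'(p_0) = 0$ precisely when $\kappa = \dfrac{p_0 q_0}{\log 2}\cdot\dfrac{g(p_0) - g(q_0)}{q_0 - p_0}$, which is exactly the hypothesis; hence $p_0$ is a critical point of $F$.

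It remains to see that this critical point is the maximum. The function $g$ is strictly decreasing on $(0,1)$ (equivalent to the standard inequality $\log\frac{1}{1-x} < \frac{x}{1-x}$), so on $(1/\gamma, 1/\sqrt{\gamma})$ both $g(p) - g\bigl(q(p)\bigr)$ and $q(p) - p$ are positive, and the sign of $F'(p)$ equals the sign of $\Psi(p) - \kappa$, where $\Psi(p) := \frac{1}{\gamma \log 2}\cdot\frac{g(p) - g(q(p))}{q(p) - p} > 0$ and $\Psi(p_0) = \kappa$. The crux is to show $\Psi$ is strictly decreasing on $(1/\gamma, 1/\sqrt{\gamma})$. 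Differentiating $\Psi$ and simplifying with the relations $x g'(x) = -\frac{1}{1-x} - g(x)$ and $x g(x) = \log(1-x)$, the inequality $\Psi'(p) < 0$ collapses, after substituting $u = 1 - p$ and $v = 1 - q(p)$ (so $u > v > 0$), to $\frac{u}{v} - \frac{v}{u} > 2\log\frac{u}{v}$, i.e. to $t - \frac1t > 2\log t$ for $t > 1$; and this holds because the difference of the two sides vanishes at $t = 1$ and has derivative $(1 - 1/t)^2 > 0$. Granting this, $\Psi(p) > \kappa$ for $p < p_0$ and $\Psi(p) < \kappa$ for $p > p_0$, so $F$ increases then decreases on $[1/\gamma, 1/\sqrt{\gamma}]$ with maximum at $p_0$; by the symmetry $F(p) = F(q(p))$ the same value is the maximum on all of $[1/\gamma, 1]$, attained at $p_0$ and $q_0$.

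The step I expect to require the most care is the monotonicity of $\Psi$ (equivalently, that $F'$ changes sign exactly once, from positive to negative): the derivative computation itself is routine, but one must carry out the algebraic simplification carefully so that it genuinely reduces to the clean elementary inequality above. I would also flag the degenerate cases — $p_0 = q_0$ (where the given formula for $\kappa$ is an indeterminate $0/0$) and $p_0 \in \{1/\gamma, 1\}$ (where $\kappa$ becomes infinite) — which should be excluded or handled separately, though these are boundary curiosities rather than real obstacles; note also that the argument uses nothing about the size of $\gamma$ beyond $\gamma > 1$.
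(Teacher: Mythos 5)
Your proposal is correct and follows essentially the same route as the paper: differentiate $L_\kappa(p,q(p))$ in $p$, observe that the sign of the derivative is governed by $\sigma(p) = \frac{g(p)-g(q(p))}{q(p)-p}$, and show $\sigma$ is strictly decreasing by the substitution $u=1-p$, $v=1-q$ reducing to the elementary inequality $-a+\tfrac{1}{a}+2\log a < 0$ for $a>1$ (the paper's $a$ is your $t=u/v$). The only differences are presentational — you make the symmetry/half-interval reduction and the change-of-sign argument a bit more explicit, and you flag the degenerate cases $p_0=q_0$ and $p_0\in\{1/\gamma,1\}$, which the paper leaves implicit.
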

\begin{proof}
We may assume $q>p$. As $dq/dp=-q/p$, we have (see \cite{holzmankorner} for more details)
$$\frac{d}{dp}\bigg[f(p,q(p))+\kappa (p+q(p))\bigg]=q\left[\frac{1}{p}\log_2{(1-p)}-\frac{1}{q}\log_2{(1-q)}\right]+\kappa(1-q/p).$$
This has the same sign as
$$\frac{pq}{\log{2}}\frac{g(p)-g(q)}{q-p}-\kappa$$
where $g(x)=\frac{\log(1-x)}{x}$. As $pq$ is constant, it suffices to show that in the range $\frac{1}{\gamma}\leq p<\frac{1}{\sqrt{\gamma}}$, the function
$$\sigma(p)=\frac{g(p)-g(q(p))}{q(p)-p}$$
is strictly decreasing. We have
\begin{align*}
\sigma'(p)=\frac{(g'(p)-g'(q)(-q/p))(q-p)-(g(p)-g(q))(-q/p-1)}{(q-p)^2}.
\end{align*}
Since $g'(x)=-\frac{1}{x(1-x)}-g(x)/x$, we obtain
\begin{align*}
p(q-p)^2\sigma'(p)&=(q-p)(pg'(p)+qg'(q))+(p+q)(g(p)-g(q))\\
&=(q-p)\left(-\frac{1}{1-p}-g(p)-\frac{1}{1-q}-g(q)\right)+(p+q)(g(p)-g(q))=\\
&=-(q-p)\left(\frac{1}{1-p}+\frac{1}{1-q}\right)+2pg(p)-2qg(q).
\end{align*}
Using the substitutions $1-p=x$, $1-q=y$, $a=x/y>1$, we get
\begin{align*}
p(q-p)^2\sigma'(p)&=-(x-y)\left(\frac{1}{x}+\frac{1}{y}\right)+2(\log{x}-\log{y})\\
&=-a+\frac{1}{a}+2\log{a}.
\end{align*}
But this is negative for $a>1$, since it is $0$ at $a=1$ and its derivative is
$$-1-\frac{1}{a^2}+\frac{2}{a}=-\frac{(1-a)^2}{a^2}.$$
So $\sigma$ is strictly decreasing.
\end{proof}

\begin{lemma}\label{efficientoptimization} Let $\gamma\geq 2.25$, $x\geq 2$ and let $(p_0,q_0)$ satisfy $p_0+q_0=2(1-1/x)$ and $p_0q_0=1/\gamma$.\\
If $0\leq p_0, q_0\leq 1$, $p_0\not=q_0$, then $\varphi(\gamma,x)=f(p_0,q_0)$.
\end{lemma}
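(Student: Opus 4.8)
The plan is to combine Lemma \ref{Lagrangesuff} (which gives an upper bound on $\varphi(\gamma,x)$ for every choice of the Lagrange multiplier $\kappa\geq 0$) with Lemma \ref{sigmaopt} (which, for a suitable $\kappa$, pins down where $L_\kappa(p,q(p))$ is maximised on the curve $pq=1/\gamma$) and Lemma \ref{whereistheoptimum} (which says the unconstrained-in-the-box maximiser lies on that curve anyway). Concretely, I would pick $\kappa=\kappa_0$ to be exactly the value
$$\kappa_0=\frac{p_0q_0}{\log 2}\cdot\frac{g(p_0)-g(q_0)}{q_0-p_0},$$
with $g(x)=\log(1-x)/x$, associated to the given pair $(p_0,q_0)$ as in Lemma \ref{sigmaopt}. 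The first thing to check is that this $\kappa_0$ is nonnegative: since $g$ is strictly decreasing on $(0,1)$ (its derivative is $g'(x)=-\tfrac{1}{x(1-x)}-g(x)/x<0$, or one just notes $g$ is the negative of an average of $-\log(1-t)/\,$-type increments), and $p_0\neq q_0$, the ratio $\bigl(g(p_0)-g(q_0)\bigr)/(q_0-p_0)$ is positive, so $\kappa_0\geq 0$ (in fact $>0$), and Lemmas \ref{whereistheoptimum}, \ref{Lagrangesuff} and \ref{sigmaopt} all apply.

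Next I would apply Lemma \ref{sigmaopt} with this $\kappa_0$: it tells us that on the constraint curve $pq=1/\gamma$, $0\le p,q\le1$, the function $L_{\kappa_0}(p,q(p))$ attains its maximum precisely at $(p_0,q_0)$. Hence the quantity $\psi(\gamma,x,\kappa_0)$ appearing in Lemma \ref{Lagrangesuff} — the maximum of $L_{\kappa_0}(p,q)$ over $pq=1/\gamma$ in the box — equals $L_{\kappa_0}(p_0,q_0)=f(p_0,q_0)+\kappa_0(p_0+q_0)$. Since $(p_0,q_0)$ was chosen so that $p_0+q_0=2(1-1/x)$, this is exactly $f(p_0,q_0)+2\kappa_0(1-1/x)$. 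Feeding this into Lemma \ref{Lagrangesuff} gives
$$\varphi(\gamma,x)\le \psi(\gamma,x,\kappa_0)-2\kappa_0(1-1/x)=f(p_0,q_0).$$
That is the upper bound.

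For the matching lower bound $\varphi(\gamma,x)\ge f(p_0,q_0)$, I would simply exhibit a feasible point of the optimisation problem (\ref{optimisation}) with objective value $f(p_0,q_0)$: take $n=1$ (or any $n$, using the constant assignment) and set $p_1=p_0$, $q_1=q_0$. Then $p_1q_1=1/\gamma\le 1/\gamma$, $\sum p_i=\sum q_i=p_0$, and I need $p_0\ge 1-1/x$; but $p_0+q_0=2(1-1/x)$ forces $\max(p_0,q_0)\ge 1-1/x\ge\min(p_0,q_0)$, so by relabelling we may assume $p_0\ge 1-1/x$ (the labels $p,q$ in (\ref{optimisation}) are symmetric, and $f$ is symmetric, so this relabelling is harmless), and then $\sum p_i=p_0\ge 1-1/x = n(1-1/x)$ with $n=1$; similarly $\sum q_i = q_0$, and here I must be slightly careful since we need $\sum q_i\ge n(1-1/x)$ as well — if $q_0<1-1/x$ this fails with $n=1$. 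To handle that I would instead take two coordinates, $n=2$, with $(p_1,q_1)=(p_0,q_0)$ and $(p_2,q_2)=(q_0,p_0)$: then $\sum p_i=\sum q_i=p_0+q_0=2(1-1/x)=n(1-1/x)$, both product constraints hold, and the objective is $\tfrac12\bigl(f(p_0,q_0)+f(q_0,p_0)\bigr)=f(p_0,q_0)$ by symmetry of $f$. So $\varphi(\gamma,x)\ge f(p_0,q_0)$, and combined with the upper bound we get equality.

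The main obstacle is entirely in verifying the hypotheses so that the three cited lemmas can be invoked: one must confirm $\kappa_0\ge0$ (monotonicity of $g$), confirm that $(p_0,q_0)$ lies in the box $0\le p_0,q_0\le1$ — which is assumed — and confirm that it lies in the range $\tfrac1\gamma\le p<\tfrac1{\sqrt\gamma}$ (equivalently $p_0<q_0$ after relabelling) where Lemma \ref{sigmaopt}'s strict monotonicity of $\sigma$ holds, so that $(p_0,q_0)$ is genuinely the unique maximiser on the curve and not merely a critical point. Once the bookkeeping of these conditions is in place, the chain of inequalities closes with no further computation.
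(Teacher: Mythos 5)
Your argument is correct and follows essentially the same route as the paper: the same choice of Lagrange multiplier $\kappa_0$, the same application of Lemmas \ref{sigmaopt} and \ref{Lagrangesuff} to get the upper bound $\varphi(\gamma,x)\leq f(p_0,q_0)$, and the identical $n=2$ feasible point $(p_1,q_1,p_2,q_2)=(p_0,q_0,q_0,p_0)$ for the matching lower bound. The extra bookkeeping you supply (positivity of $\kappa_0$ via monotonicity of $g$, the abandoned $n=1$ attempt, the remark that $p_0\neq q_0$ places $(p_0,q_0)$ strictly inside the range where $\sigma$ is strictly decreasing) is all sound and simply makes explicit what the paper leaves implicit.
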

\begin{proof}
	Choose $$\kappa=\frac{p_0q_0}{\log{2}}\frac{g(p_0)-g(q_0)}{q_0-p_0}$$
(this is positive, since $g'(x)<0$, see \cite{holzmankorner}.) By Lemma \ref{sigmaopt}, $\psi(\gamma, x, \kappa)=L_\kappa(p_0,q_0)$. By Lemma \ref{Lagrangesuff}, $\varphi(\gamma, x)\leq f(p_0,q_0)$. Equality can be achieved by choosing $n=2$, $p_1=q_2=p_0$, $p_2=q_1=q_0$.
\end{proof}

\subsection*{Implementation using a computer} Given $2=\lambda_0<\lambda_1<...<\lambda_N$ and $\rho_0=2$, the program iteratively looks for $\rho_{i+1}$ such that $r_1=\rho_i$, $r_2=\rho_{i+1}$, $\lambda=\lambda_i$, $\mu=\lambda_{i+1}$ satisfy (\ref{inductiveineq}). To make sure that rounding errors can be ignored, we require that the inequality holds with a difference of at least $\delta=10^{-8}$. We look for a minimal $\rho_{i+1}$ with these constraints. While searching for appropriate $\gamma$, the upper bounds we use for $\varphi(\gamma,x)$ are as follows.
\begin{enumerate}
	\item If Lemma \ref{efficientoptimization} can be used, we use it.\vspace{-6pt}
	\item If for $p,q\in[0,1]$, we have $pq\leq \gamma \implies p+q<2(1-1/x)$, then $\varphi(\gamma,x)=-\infty$\vspace{-6pt}
	\item If none of the above holds, we choose some $p_0$ and $q_0$ and apply Lemmas \ref{sigmaopt} and \ref{Lagrangesuff}.
\end{enumerate}
We note however, that for the final values of $\gamma$ we obtain,  we only need to use Case 1 (and Case 2 for $n/k$ close to $3.6$): Case 3 is only used to ease the search.
\section*{Acknowledgements}
I would like to thank Imre Leader for suggesting this problem for investigation and for useful discussions throughout the research, which was supported by Trinity College, Cambridge.
\bibliography{Bibliography}
\bibliographystyle{abbrv}

\end{document}